\definecolor{labelkey}{HTML}{0455BF}
\definecolor{refkey}{rgb}{0,0.6,0.0}
\definecolor{dblue}{HTML}{044EAF}
\definecolor{dgreen}{HTML}{02724A}
\definecolor{myellow}{HTML}{D97904}
\definecolor{dred}{HTML}{D90404}
\renewcommand\familydefault{\rmdefault}
\DeclareMathAlphabet{\mathrm}{OT1}{\familydefault}{m}{n}
\def\operator@font{\mathgroup\symoperators\rm}
\def\@mathmeasure#1#2#3{\setbox#1\hbox{%
  \m@th$#2#3$}}
\renewcommand{\leq}{\ensuremath{\leqslant}}
\renewcommand{\geq}{\ensuremath{\geqslant}}
\newcommand{\scal}[2]{\langle{{#1}\mid{#2}}\rangle}
\newcommand{\sscal}[2]{\big\langle{{#1}\mid{#2}}\big\rangle}
\newcommand{\menge}[2]{\big\{{#1}~|~{#2}\big\}} 
\newcommand{\GGG}{\ensuremath{\boldsymbol{\mathcal{G}}}}
\newcommand{\HHH}{\ensuremath{\boldsymbol{\mathcal{H}}}}
\newcommand{\SHH}{\ensuremath{\boldsymbol{\mathsf{H}}}}
\newcommand{\HH}{\mathcal{H}}
\newcommand{\GG}{\mathcal{G}}
\newcommand{\NN}{\mathbb{N}}
\newcommand{\pinf}{\ensuremath{{{+}\infty}}}
\newcommand{\zeroun}{\ensuremath{\left]0,1\right[}}
\newcommand{\RPP}{\ensuremath{\left]0,{+}\infty\right[}}
\newcommand{\emp}{\varnothing}
\newcommand{\Sum}{\displaystyle\sum}
\newcommand{\Id}{\mathrm{Id}}
\newcommand{\exi}{\ensuremath{\exists\,}}
\DeclareMathOperator{\zer}{zer}
\newcommand*\mute{{\mkern 2mu\cdot\mkern 2mu}}
\def\abstract{\noindent{\bfseries Abstract}. \ignorespaces}
\newtheorem{theorem}{Theorem}
\newtheorem{corollary}[theorem]{Corollary}
\theoremstyle{plain}{\theorembodyfont{\rmfamily}%
}
\newtheorem{fact}[theorem]{Fact}
\theoremstyle{plain}{\theorembodyfont{\rmfamily}%
}
\theoremstyle{plain}{\theorembodyfont{\rmfamily}%
\newtheorem{remark}[theorem]{Remark}}
\theoremstyle{plain}{\theorembodyfont{\rmfamily}%
}
\theoremstyle{plain}{\theorembodyfont{\rmfamily}%
}
\theoremstyle{plain}{\theorembodyfont{\rmfamily}%
}
\theoremstyle{plain}{\theorembodyfont{\rmfamily}%
\newtheorem{problem}[theorem]{Problem}}
\setlist[enumerate]{itemsep=-1pt}
\setlist[itemize]{itemsep=-1pt}
\newcommand{\email}[1]{\href{mailto:#1}{\nolinkurl{#1}}}
\author{Minh N. B\`ui}
\affil{North Carolina State University,
Department of Mathematics,
Raleigh, NC 27695-8205, USA\\
\email{mnbui@ncsu.edu}
}
\begin{document}

\title{\sffamily\huge\vskip -10mm
Projective Splitting as a Warped Proximal Algorithm
}

\date{~}

\maketitle

\begin{abstract}
We show that the asynchronous block-iterative primal-dual
projective splitting framework introduced by P. L. Combettes and
J. Eckstein in their 2018 \emph{Math. Program.} paper can be
viewed as an instantiation of the recently proposed warped
proximal algorithm.
\end{abstract}

\begin{keywords}
Warped proximal algorithm,
projective splitting,
primal-dual algorithm,
splitting algorithm,
monotone inclusion,
monotone operator.
\end{keywords}

In \cite{Jmaa20}, the warped proximal algorithm was proposed and
its pertinence was illustrated through the ability to unify
existing methods such as those of
\cite{Siop14,Nfao15,Roc76a,Tsen00},
and to design novel flexible ones for solving
challenging monotone inclusions. Let us state a version of
\cite[Theorem~4.2]{Jmaa20}.

\begin{fact}
\label{f:1}
Let $\SHH$ be a real Hilbert space,
let $\boldsymbol{\mathsf{M}}\colon\SHH\to 2^{\SHH}$
be a maximally monotone operator such that
$\zer\boldsymbol{\mathsf{M}}\neq\emp$,
let $\boldsymbol{\mathsf{x}}_0\in\SHH$,
let $\varepsilon\in\zeroun$,
let $\alpha\in\RPP$, and let $\beta\in\left[\alpha,\pinf\right[$.
For every $n\in\NN$, let
$\boldsymbol{\mathsf{K}}_n\colon\SHH\to\SHH$
be $\alpha$-strongly monotone and $\beta$-Lipschitzian,
and let $\lambda_n\in\left[\varepsilon,2-\varepsilon\right]$.
Iterate
\begin{equation}
\label{e:1602}
\begin{array}{l}
\text{for}\;n=0,1,\ldots\\
\left\lfloor
\begin{array}{l}
\text{take}\;\widetilde{\boldsymbol{\mathsf{x}}}_n\in\SHH\\
\boldsymbol{\mathsf{y}}_n=
(\boldsymbol{\mathsf{K}}_n+\boldsymbol{\mathsf{M}})^{-1}
(\boldsymbol{\mathsf{K}}_n\widetilde{\boldsymbol{\mathsf{x}}}_n)\\
\boldsymbol{\mathsf{y}}_n^*=
\boldsymbol{\mathsf{K}}_n\widetilde{\boldsymbol{\mathsf{x}}}_n-
\boldsymbol{\mathsf{K}}_n\boldsymbol{\mathsf{y}}_n\\
\text{if}\;\scal{\boldsymbol{\mathsf{x}}_n-
\boldsymbol{\mathsf{y}}_n}{\boldsymbol{\mathsf{y}}_n^*}>0\\
\left\lfloor
\begin{array}{l}
\boldsymbol{\mathsf{x}}_{n+1}=
\boldsymbol{\mathsf{x}}_n-
\dfrac{\lambda_n\scal{\boldsymbol{\mathsf{x}}_n-
\boldsymbol{\mathsf{y}}_n}{\boldsymbol{\mathsf{y}}_n^*}}
{\|\boldsymbol{\mathsf{y}}_n^*\|^2}\,
\boldsymbol{\mathsf{y}}_n^*\\
\end{array}
\right.\\
\text{else}\\
\left\lfloor
\begin{array}{l}
\boldsymbol{\mathsf{x}}_{n+1}=\boldsymbol{\mathsf{x}}_n.
\end{array}
\right.\\[2mm]
\end{array}
\right.\\
\end{array}
\end{equation}
Then the following hold:
\begin{enumerate}
\item
\label{f:1i-}
$(\boldsymbol{\mathsf{x}}_n)_{n\in\NN}$ is bounded.
\item
\label{f:1i}
$\sum_{n\in\NN}\|\boldsymbol{\mathsf{x}}_{n+1}-
\boldsymbol{\mathsf{x}}_n\|^2<\pinf$.
\item
\label{f:1ii-}
$(\forall n\in\NN)$
$\scal{\boldsymbol{\mathsf{x}}_n-\boldsymbol{\mathsf{y}}_n}{
\boldsymbol{\mathsf{y}}_n^*}\leq
\varepsilon^{-1}\|\boldsymbol{\mathsf{y}}_n^*\|
\,\|\boldsymbol{\mathsf{x}}_{n+1}-\boldsymbol{\mathsf{x}}_n\|$.
\item
\label{f:1ii}
Suppose that $\widetilde{\boldsymbol{\mathsf{x}}}_n-
\boldsymbol{\mathsf{x}}_n\to\boldsymbol{\mathsf{0}}$.
Then $(\boldsymbol{\mathsf{x}}_n)_{n\in\NN}$ converges weakly to a
point in $\zer\boldsymbol{\mathsf{M}}$.
\end{enumerate}
\end{fact}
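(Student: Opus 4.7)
The backbone of the approach is the structural observation that, by construction, $\SYY_n^*=\SKK_n\widetilde{\SXX}_n-\SKK_n\SYY_n\in\SMM\SYY_n$, so $(\SYY_n,\SYY_n^*)\in\gra\SMM$. Monotonicity of $\SMM$ then yields $\scal{\SXX^*-\SYY_n}{\SYY_n^*}\leq 0$ for every $\SXX^*\in\zer\SMM$; that is, $\zer\SMM$ sits inside the closed affine halfspace $\{\SXX\in\SHH\,:\,\scal{\SXX-\SYY_n}{\SYY_n^*}\leq 0\}$. The update is accordingly a relaxation by $\lambda_n$ of the projection of $\SXX_n$ onto this halfspace (``if'' branch), or the identity when $\SXX_n$ already lies in it (``else'' branch). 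The plan is first to extract, in both cases, the workhorse inequality
\begin{equation*}
\|\SXX_{n+1}-\SXX^*\|^2 \leq \|\SXX_n-\SXX^*\|^2 - \frac{2-\lambda_n}{\lambda_n}\|\SXX_{n+1}-\SXX_n\|^2,\qquad\SXX^*\in\zer\SMM,
\end{equation*}
from which \ref{f:1i-} follows by Fej\'er monotonicity and \ref{f:1i} by telescoping and exploiting $\lambda_n\in[\varepsilon,2-\varepsilon]$.

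Claim \ref{f:1ii-} I would read off the explicit update: in the ``if'' branch, $\lambda_n\scal{\SXX_n-\SYY_n}{\SYY_n^*}=\|\SYY_n^*\|\,\|\SXX_{n+1}-\SXX_n\|$ and $\lambda_n\geq\varepsilon$ concludes; in the ``else'' branch the left-hand side is nonpositive and the bound is trivial.

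The substance of the theorem is \ref{f:1ii}, which I would establish through the classical Fej\'er/Opial route: with boundedness and Fej\'er monotonicity in hand, it suffices to show that every weak cluster point of $(\SXX_n)$ lies in $\zer\SMM$. To this end, I would first aim at $\SYY_n-\SXX_n\to\boldsymbol{\mathsf{0}}$. Strong monotonicity of $\SKK_n$ gives
\begin{equation*}
\alpha\|\widetilde{\SXX}_n-\SYY_n\|^2 \leq \scal{\widetilde{\SXX}_n-\SYY_n}{\SYY_n^*} = \scal{\widetilde{\SXX}_n-\SXX_n}{\SYY_n^*} + \scal{\SXX_n-\SYY_n}{\SYY_n^*};
\end{equation*}
bounding the first summand by Cauchy--Schwarz, the second by \ref{f:1ii-}, and using the Lipschitz bound $\|\SYY_n^*\|\leq\beta\|\widetilde{\SXX}_n-\SYY_n\|$ then yields
\begin{equation*}
\|\widetilde{\SXX}_n-\SYY_n\| \leq \frac{\beta}{\alpha}\left(\|\widetilde{\SXX}_n-\SXX_n\| + \varepsilon^{-1}\|\SXX_{n+1}-\SXX_n\|\right),
\end{equation*}
whose right-hand side tends to $0$ by \ref{f:1i} and the hypothesis. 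Lipschitzianity of $\SKK_n$ then delivers $\SYY_n^*\to\boldsymbol{\mathsf{0}}$ strongly, and the triangle inequality gives $\SYY_n-\SXX_n\to\boldsymbol{\mathsf{0}}$.

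Given any weak cluster point $\SXX$ of $(\SXX_n)$, say $\SXX_{k_n}\weakly\SXX$, we then have $\SYY_{k_n}\weakly\SXX$ together with $\SYY_{k_n}^*\to\boldsymbol{\mathsf{0}}$ strongly; maximal monotonicity of $\SMM$, equivalently weak-strong sequential closedness of $\gra\SMM$, forces $\SXX\in\zer\SMM$, and the classical weak convergence theorem for Fej\'er monotone sequences concludes that $(\SXX_n)$ converges weakly to a point of $\zer\SMM$. The principal obstacle in the whole programme is precisely the norm convergence $\SYY_n-\SXX_n\to\boldsymbol{\mathsf{0}}$: it is the step that genuinely requires coupling the one-sided bound \ref{f:1ii-} with the two-sided metric control on $\SKK_n$ furnished by strong monotonicity and Lipschitzianity, together with the summable and vanishing perturbation information from \ref{f:1i} and the hypothesis $\widetilde{\SXX}_n-\SXX_n\to\boldsymbol{\mathsf{0}}$.
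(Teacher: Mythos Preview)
The paper does not supply a proof of this statement: it is presented as a Fact and attributed to \cite[Theorem~4.2]{Jmaa20}, so there is no in-paper argument to compare against. Your self-contained proof is correct and is essentially the argument one finds in that reference: identify $(\boldsymbol{\mathsf{y}}_n,\boldsymbol{\mathsf{y}}_n^*)\in\gra\boldsymbol{\mathsf{M}}$, recognise the update as a relaxed projection onto the separating halfspace $\{\boldsymbol{\mathsf{x}}:\scal{\boldsymbol{\mathsf{x}}-\boldsymbol{\mathsf{y}}_n}{\boldsymbol{\mathsf{y}}_n^*}\leq 0\}\supset\zer\boldsymbol{\mathsf{M}}$, extract Fej\'er monotonicity and \ref{f:1i-}--\ref{f:1ii-}, and for \ref{f:1ii} couple the strong monotonicity/Lipschitz control on $\boldsymbol{\mathsf{K}}_n$ with \ref{f:1ii-} to get $\widetilde{\boldsymbol{\mathsf{x}}}_n-\boldsymbol{\mathsf{y}}_n\to\boldsymbol{\mathsf{0}}$ and $\boldsymbol{\mathsf{y}}_n^*\to\boldsymbol{\mathsf{0}}$, then close via the weak--strong sequential closedness of $\gra\boldsymbol{\mathsf{M}}$ and the Fej\'er weak-convergence theorem.
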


\newpage

A problem of interest in modern nonlinear analysis is the
following (see, e.g, \cite{Siop14,Moor21,MaPr18} and the
references therein for discussions on this problem).

\begin{problem}
\label{prob:1}
Let $(\HH_i)_{i\in I}$ and $(\GG_k)_{k\in K}$
be finite families of real Hilbert spaces.
For every $i\in I$ and every $k\in K$,
let $A_i\colon\HH_i\to 2^{\HH_i}$ and $B_k\colon\GG_k\to 2^{\GG_k}$
be maximally monotone,
let $z_i^*\in\HH_i$,
let $r_k\in\GG_k$, and
let $L_{k,i}\colon\HH_i\to\GG_k$ be linear and bounded.
The problem is to
\begin{equation}
\label{e:kt}
\text{find}\;\:(\overline{x}_i)_{i\in I}\in
\bigtimes_{i\in I}\HH_i
\;\:\text{and}\;\:
(\overline{v}_k^*)_{k\in K}\in
\bigtimes_{k\in K}\GG_k
\;\:\text{such that}\;\:
\begin{cases}
(\forall i\in I)\;\;z_i^*-
\Sum_{k\in K}L_{k,i}^*\overline{v}_k^*\in A_i\overline{x}_i\\
(\forall k\in K)\;\;
\Sum_{i\in I}L_{k,i}\overline{x}_i-r_k\in
B_k^{-1}\overline{v}_k^*.
\end{cases}
\end{equation}
The set of solutions to \eqref{e:kt} is denoted by
$\boldsymbol{\mathsf{Z}}$.
\end{problem}

The first asynchronous block-iterative algorithm to solve
Problem~\ref{prob:1} was proposed in \cite[Algorithm~12]{MaPr18}
as an extension of projective splitting techniques found in
\cite{Siop14,Ecks09}.
The present paper shows that \cite[Algorithm~12]{MaPr18}
can be viewed as a special case of \eqref{e:1602}.
Towards this goal, we first derive an abstract weak convergence
principle from Fact~\ref{f:1}. We refer the reader to
\cite{Livre1} for background in monotone operator theory and
nonlinear analysis.

\begin{theorem}
\label{t:1}
Let $\SHH$ be a real Hilbert space,
let $\boldsymbol{\mathsf{A}}\colon\SHH\to 2^{\SHH}$
be a maximally monotone operator, and let
$\boldsymbol{\mathsf{S}}\colon\SHH\to\SHH$ be a bounded linear
operator such that
$\boldsymbol{\mathsf{S}}^*={-}\boldsymbol{\mathsf{S}}$.
In addition, let $\boldsymbol{\mathsf{x}}_0\in\SHH$, 
let $\varepsilon\in\zeroun$,
let $\alpha\in\RPP$,
let $\rho\in\left[\alpha,\pinf\right[$,
and for every $n\in\NN$, let
$\boldsymbol{\mathsf{F}}_n\colon\SHH\to\SHH$ be
$\alpha$-strongly monotone and $\rho$-Lipschitzian,
and let $\lambda_n\in\left[\varepsilon,2-\varepsilon\right]$.
Iterate
\begin{equation}
\label{e:21}
\begin{array}{l}
\text{for}\;n=0,1,\ldots\\
\left\lfloor
\begin{array}{l}
\text{take}\;
\boldsymbol{\mathsf{u}}_n\in\SHH,\;
\boldsymbol{\mathsf{e}}_n^*\in\SHH,\;\text{and}\;
\boldsymbol{\mathsf{f}}_n^*\in\SHH\\
\boldsymbol{\mathsf{u}}_n^*=
\boldsymbol{\mathsf{F}}_n\boldsymbol{\mathsf{u}}_n-
\boldsymbol{\mathsf{S}}\boldsymbol{\mathsf{u}}_n+
\boldsymbol{\mathsf{e}}_n^*+
\boldsymbol{\mathsf{f}}_n^*
\\
\boldsymbol{\mathsf{y}}_n=
(\boldsymbol{\mathsf{F}}_n+\boldsymbol{\mathsf{A}})^{-1}
\boldsymbol{\mathsf{u}}_n^*\\
\boldsymbol{\mathsf{a}}_n^*=\boldsymbol{\mathsf{u}}_n^*-
\boldsymbol{\mathsf{F}}_n\boldsymbol{\mathsf{y}}_n\\
\boldsymbol{\mathsf{y}}_n^*=
\boldsymbol{\mathsf{a}}_n^*+
\boldsymbol{\mathsf{S}}\boldsymbol{\mathsf{y}}_n\\
\pi_n=\scal{\boldsymbol{\mathsf{x}}_n
}{\boldsymbol{\mathsf{y}}_n^*}-\scal{\boldsymbol{\mathsf{y}}_n
}{\boldsymbol{\mathsf{a}}_n^*}\\
\text{if}\;\pi_n>0\\
\left\lfloor
\begin{array}{l}
\tau_n=\|\boldsymbol{\mathsf{y}}_n^*\|^2\\
\theta_n=\lambda_n\pi_n/\tau_n\\
\boldsymbol{\mathsf{x}}_{n+1}=
\boldsymbol{\mathsf{x}}_n-\theta_n\boldsymbol{\mathsf{y}}_n^*
\end{array}
\right.
\\
\text{else}\\
\left\lfloor
\begin{array}{l}
\boldsymbol{\mathsf{x}}_{n+1}=\boldsymbol{\mathsf{x}}_n.
\end{array}
\right.
\end{array}
\right.
\end{array}
\end{equation}
Suppose that
$\zer(\boldsymbol{\mathsf{A}}+\boldsymbol{\mathsf{S}})\neq\emp$.
Then the following hold:
\begin{enumerate}
\item
\label{t:1a}
$\sum_{n\in\NN}\|\boldsymbol{\mathsf{x}}_{n+1}-
\boldsymbol{\mathsf{x}}_n\|^2<\pinf$.
\item
\label{t:1b}
Suppose that
$\boldsymbol{\mathsf{u}}_n-\boldsymbol{\mathsf{x}}_n
\to\boldsymbol{\mathsf{0}}$,
that $\boldsymbol{\mathsf{e}}_n^*\to\boldsymbol{\mathsf{0}}$,
that $(\boldsymbol{\mathsf{f}}_n^*)_{n\in\NN}$ is bounded,
and that there exists $\delta\in\zeroun$ such that
\begin{equation}
\label{e:1741}
(\forall n\in\NN)\quad
\begin{cases}
\scal{\boldsymbol{\mathsf{u}}_n
-\boldsymbol{\mathsf{y}}_n}{\boldsymbol{\mathsf{f}}_n^*}
\geq{-}\delta\scal{\boldsymbol{\mathsf{u}}_n-
\boldsymbol{\mathsf{y}}_n}{
\boldsymbol{\mathsf{F}}_n\boldsymbol{\mathsf{u}}_n-
\boldsymbol{\mathsf{F}}_n\boldsymbol{\mathsf{y}}_n}
\\
\scal{\boldsymbol{\mathsf{a}}_n^*+
\boldsymbol{\mathsf{S}}\boldsymbol{\mathsf{u}}_n-
\boldsymbol{\mathsf{e}}_n^*}{\boldsymbol{\mathsf{f}}_n^*}
\leq\delta\|\boldsymbol{\mathsf{a}}_n^*+
\boldsymbol{\mathsf{S}}\boldsymbol{\mathsf{u}}_n-
\boldsymbol{\mathsf{e}}_n^*\|^2.
\end{cases}
\end{equation}
Then $(\boldsymbol{\mathsf{x}}_n)_{n\in\NN}$ converges weakly to a
point in $\zer(\boldsymbol{\mathsf{A}}+\boldsymbol{\mathsf{S}})$.
\end{enumerate}
\end{theorem}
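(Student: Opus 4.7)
The plan is to realize \eqref{e:21} as an instance of \eqref{e:1602}. I set $\boldsymbol{\mathsf{M}}:=\boldsymbol{\mathsf{A}}+\boldsymbol{\mathsf{S}}$, which is maximally monotone because $\boldsymbol{\mathsf{S}}$ is a bounded linear skew operator with full domain, with $\zer\boldsymbol{\mathsf{M}}\neq\emp$ by hypothesis, and $\boldsymbol{\mathsf{K}}_n:=\boldsymbol{\mathsf{F}}_n-\boldsymbol{\mathsf{S}}$, which inherits $\alpha$-strong monotonicity from $\boldsymbol{\mathsf{F}}_n$ (the skew term vanishing on the diagonal) and is $(\rho+\|\boldsymbol{\mathsf{S}}\|)$-Lipschitzian. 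Minty's theorem turns $\boldsymbol{\mathsf{K}}_n\colon\SHH\to\SHH$ into a bijection, so I can define $\widetilde{\boldsymbol{\mathsf{x}}}_n:=\boldsymbol{\mathsf{K}}_n^{-1}\boldsymbol{\mathsf{u}}_n^*$. Since $\boldsymbol{\mathsf{K}}_n+\boldsymbol{\mathsf{M}}=\boldsymbol{\mathsf{F}}_n+\boldsymbol{\mathsf{A}}$, it follows that $(\boldsymbol{\mathsf{K}}_n+\boldsymbol{\mathsf{M}})^{-1}(\boldsymbol{\mathsf{K}}_n\widetilde{\boldsymbol{\mathsf{x}}}_n)=\boldsymbol{\mathsf{y}}_n$ and $\boldsymbol{\mathsf{K}}_n\widetilde{\boldsymbol{\mathsf{x}}}_n-\boldsymbol{\mathsf{K}}_n\boldsymbol{\mathsf{y}}_n=\boldsymbol{\mathsf{u}}_n^*-\boldsymbol{\mathsf{F}}_n\boldsymbol{\mathsf{y}}_n+\boldsymbol{\mathsf{S}}\boldsymbol{\mathsf{y}}_n=\boldsymbol{\mathsf{a}}_n^*+\boldsymbol{\mathsf{S}}\boldsymbol{\mathsf{y}}_n=\boldsymbol{\mathsf{y}}_n^*$; skew-symmetry gives $\langle\boldsymbol{\mathsf{y}}_n\mid\boldsymbol{\mathsf{S}}\boldsymbol{\mathsf{y}}_n\rangle=0$ and hence $\langle\boldsymbol{\mathsf{x}}_n-\boldsymbol{\mathsf{y}}_n\mid\boldsymbol{\mathsf{y}}_n^*\rangle=\pi_n$, so the update rules match. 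Item \ref{t:1a} is then immediate from the summability statement of Fact~\ref{f:1}.

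The obstruction to a direct application of the weak convergence part of Fact~\ref{f:1} for item \ref{t:1b} is that its hypothesis $\widetilde{\boldsymbol{\mathsf{x}}}_n-\boldsymbol{\mathsf{x}}_n\to\boldsymbol{\mathsf{0}}$ is not available: the $\alpha$-strong monotonicity of $\boldsymbol{\mathsf{K}}_n$ only yields $\alpha\|\widetilde{\boldsymbol{\mathsf{x}}}_n-\boldsymbol{\mathsf{u}}_n\|\leq\|\boldsymbol{\mathsf{e}}_n^*+\boldsymbol{\mathsf{f}}_n^*\|$, and $(\boldsymbol{\mathsf{f}}_n^*)_{n\in\NN}$ is only assumed bounded. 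The role of \eqref{e:1741} is precisely to compensate for this, and I intend to prove weak convergence by hand, using the upper bound on $\pi_n$ from the third item of Fact~\ref{f:1} as the main external input.

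The core (and main technical obstacle) is to show that $\boldsymbol{\mathsf{u}}_n-\boldsymbol{\mathsf{y}}_n\to\boldsymbol{\mathsf{0}}$. A routine calculation identifies $\boldsymbol{\mathsf{a}}_n^*+\boldsymbol{\mathsf{S}}\boldsymbol{\mathsf{u}}_n-\boldsymbol{\mathsf{e}}_n^*=\boldsymbol{\mathsf{v}}_n+\boldsymbol{\mathsf{f}}_n^*$ with $\boldsymbol{\mathsf{v}}_n:=\boldsymbol{\mathsf{F}}_n\boldsymbol{\mathsf{u}}_n-\boldsymbol{\mathsf{F}}_n\boldsymbol{\mathsf{y}}_n$, so the second inequality in \eqref{e:1741} reads $\langle\boldsymbol{\mathsf{v}}_n+\boldsymbol{\mathsf{f}}_n^*\mid\boldsymbol{\mathsf{f}}_n^*\rangle\leq\delta\|\boldsymbol{\mathsf{v}}_n+\boldsymbol{\mathsf{f}}_n^*\|^2$; expanding and treating this as a quadratic in $\|\boldsymbol{\mathsf{f}}_n^*\|$ gives $\|\boldsymbol{\mathsf{f}}_n^*\|\leq\kappa\|\boldsymbol{\mathsf{v}}_n\|\leq\kappa\rho\|\boldsymbol{\mathsf{u}}_n-\boldsymbol{\mathsf{y}}_n\|$ for a $\delta$-dependent constant $\kappa$, and hence $\|\boldsymbol{\mathsf{y}}_n^*\|\leq C\|\boldsymbol{\mathsf{u}}_n-\boldsymbol{\mathsf{y}}_n\|+\|\boldsymbol{\mathsf{e}}_n^*\|$. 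On the other hand, decomposing $\pi_n=\langle\boldsymbol{\mathsf{x}}_n-\boldsymbol{\mathsf{u}}_n\mid\boldsymbol{\mathsf{y}}_n^*\rangle+\langle\boldsymbol{\mathsf{u}}_n-\boldsymbol{\mathsf{y}}_n\mid\boldsymbol{\mathsf{y}}_n^*\rangle$, using $\langle\boldsymbol{\mathsf{u}}_n-\boldsymbol{\mathsf{y}}_n\mid\boldsymbol{\mathsf{S}}(\boldsymbol{\mathsf{u}}_n-\boldsymbol{\mathsf{y}}_n)\rangle=0$, the first inequality in \eqref{e:1741}, and the $\alpha$-strong monotonicity of $\boldsymbol{\mathsf{F}}_n$, I obtain the lower bound $\pi_n\geq(1-\delta)\alpha\|\boldsymbol{\mathsf{u}}_n-\boldsymbol{\mathsf{y}}_n\|^2-\|\boldsymbol{\mathsf{x}}_n-\boldsymbol{\mathsf{u}}_n\|\|\boldsymbol{\mathsf{y}}_n^*\|-\|\boldsymbol{\mathsf{u}}_n-\boldsymbol{\mathsf{y}}_n\|\|\boldsymbol{\mathsf{e}}_n^*\|$. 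Pairing this with the upper bound $\pi_n\leq\varepsilon^{-1}\|\boldsymbol{\mathsf{y}}_n^*\|\|\boldsymbol{\mathsf{x}}_{n+1}-\boldsymbol{\mathsf{x}}_n\|$ from Fact~\ref{f:1} (trivially valid when $\pi_n\leq 0$), and using $\|\boldsymbol{\mathsf{x}}_{n+1}-\boldsymbol{\mathsf{x}}_n\|\to 0$, $\boldsymbol{\mathsf{u}}_n-\boldsymbol{\mathsf{x}}_n\to\boldsymbol{\mathsf{0}}$, and $\boldsymbol{\mathsf{e}}_n^*\to\boldsymbol{\mathsf{0}}$, I arrive at a quadratic inequality in $d_n:=\|\boldsymbol{\mathsf{u}}_n-\boldsymbol{\mathsf{y}}_n\|$ with positive leading coefficient $(1-\delta)\alpha$ and vanishing lower-order coefficients, forcing $d_n\to 0$.

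Once $d_n\to 0$, the bound on $\|\boldsymbol{\mathsf{y}}_n^*\|$ delivers $\boldsymbol{\mathsf{y}}_n^*\to\boldsymbol{\mathsf{0}}$ strongly, and $\boldsymbol{\mathsf{y}}_n-\boldsymbol{\mathsf{x}}_n\to\boldsymbol{\mathsf{0}}$. A standard Fej\'er-type calculation, using the monotonicity of $\boldsymbol{\mathsf{A}}+\boldsymbol{\mathsf{S}}$ at $(\boldsymbol{\mathsf{y}}_n,\boldsymbol{\mathsf{y}}_n^*)$ against any $\overline{\boldsymbol{\mathsf{x}}}\in\zer(\boldsymbol{\mathsf{A}}+\boldsymbol{\mathsf{S}})$ together with the explicit form of the update, ensures Fej\'er monotonicity of $(\boldsymbol{\mathsf{x}}_n)_{n\in\NN}$ relative to $\zer(\boldsymbol{\mathsf{A}}+\boldsymbol{\mathsf{S}})$. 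Since $(\boldsymbol{\mathsf{y}}_n,\boldsymbol{\mathsf{y}}_n^*)\in\gra(\boldsymbol{\mathsf{A}}+\boldsymbol{\mathsf{S}})$ with $\boldsymbol{\mathsf{y}}_n-\boldsymbol{\mathsf{x}}_n\to\boldsymbol{\mathsf{0}}$ and $\boldsymbol{\mathsf{y}}_n^*\to\boldsymbol{\mathsf{0}}$ strongly, demiclosedness of $\gra(\boldsymbol{\mathsf{A}}+\boldsymbol{\mathsf{S}})$ forces every weak cluster point of $(\boldsymbol{\mathsf{x}}_n)_{n\in\NN}$ into $\zer(\boldsymbol{\mathsf{A}}+\boldsymbol{\mathsf{S}})$, and Opial's lemma then concludes the weak convergence.
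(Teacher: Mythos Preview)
Your embedding of \eqref{e:21} into \eqref{e:1602} via $\boldsymbol{\mathsf{M}}=\boldsymbol{\mathsf{A}}+\boldsymbol{\mathsf{S}}$ and $\boldsymbol{\mathsf{K}}_n=\boldsymbol{\mathsf{F}}_n-\boldsymbol{\mathsf{S}}$ is exactly the paper's, and item~\ref{t:1a} is handled identically. For item~\ref{t:1b} the two arguments diverge. Contrary to your remark that $\widetilde{\boldsymbol{\mathsf{x}}}_n-\boldsymbol{\mathsf{x}}_n\to\boldsymbol{\mathsf{0}}$ is ``not available,'' the paper does verify it and then simply invokes Fact~\ref{f:1}\ref{f:1ii}: it first uses the assumed boundedness of $(\boldsymbol{\mathsf{f}}_n^*)_{n\in\NN}$ to show that $(\boldsymbol{\mathsf{y}}_n)_{n\in\NN}$ and $(\boldsymbol{\mathsf{y}}_n^*)_{n\in\NN}$ are bounded, whence Fact~\ref{f:1}\ref{f:1ii-} gives $\varlimsup\pi_n\leq 0$; the first line of \eqref{e:1741} then forces $\boldsymbol{\mathsf{F}}_n\boldsymbol{\mathsf{u}}_n-\boldsymbol{\mathsf{F}}_n\boldsymbol{\mathsf{y}}_n\to\boldsymbol{\mathsf{0}}$, after which the second line forces $\boldsymbol{\mathsf{f}}_n^*\to\boldsymbol{\mathsf{0}}$, and this yields $\widetilde{\boldsymbol{\mathsf{x}}}_n-\boldsymbol{\mathsf{x}}_n\to\boldsymbol{\mathsf{0}}$. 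Your route instead extracts from the second line of \eqref{e:1741} an \emph{a priori} pointwise bound $\|\boldsymbol{\mathsf{f}}_n^*\|\leq\kappa\|\boldsymbol{\mathsf{F}}_n\boldsymbol{\mathsf{u}}_n-\boldsymbol{\mathsf{F}}_n\boldsymbol{\mathsf{y}}_n\|$, which lets you control $\|\boldsymbol{\mathsf{y}}_n^*\|$ linearly in $d_n=\|\boldsymbol{\mathsf{u}}_n-\boldsymbol{\mathsf{y}}_n\|$ without any preliminary boundedness step; combining with the lower bound on $\pi_n$ from the first line and with Fact~\ref{f:1}\ref{f:1ii-} closes into a quadratic in $d_n$ with vanishing lower-order terms. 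This is a correct and rather clean argument, and incidentally never uses the boundedness hypothesis on $(\boldsymbol{\mathsf{f}}_n^*)_{n\in\NN}$. The closing Fej\'er/demiclosedness paragraph you sketch is valid but unnecessary: once $d_n\to 0$, your own bound gives $\boldsymbol{\mathsf{f}}_n^*\to\boldsymbol{\mathsf{0}}$, hence $\alpha\|\widetilde{\boldsymbol{\mathsf{x}}}_n-\boldsymbol{\mathsf{x}}_n\|\leq\|\boldsymbol{\mathsf{K}}_n\boldsymbol{\mathsf{u}}_n-\boldsymbol{\mathsf{K}}_n\boldsymbol{\mathsf{x}}_n+\boldsymbol{\mathsf{e}}_n^*+\boldsymbol{\mathsf{f}}_n^*\|\to 0$, and you may invoke Fact~\ref{f:1}\ref{f:1ii} directly, as the paper does.
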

\begin{proof}
Set $\boldsymbol{\mathsf{M}}=
\boldsymbol{\mathsf{A}}+\boldsymbol{\mathsf{S}}$
and $(\forall n\in\NN)$ $\boldsymbol{\mathsf{K}}_n=
\boldsymbol{\mathsf{F}}_n-\boldsymbol{\mathsf{S}}$.
Then, it follows from
\cite[Example~20.35 and Corollary~25.5(i)]{Livre1}
that $\boldsymbol{\mathsf{M}}$ is maximally monotone
with $\zer\boldsymbol{\mathsf{M}}\neq\emp$.
Now take $n\in\NN$. We have
\begin{equation}
\label{e:5275}
\boldsymbol{\mathsf{K}}_n+\boldsymbol{\mathsf{M}}=
\boldsymbol{\mathsf{F}}_n+\boldsymbol{\mathsf{A}}.
\end{equation}
Since $\boldsymbol{\mathsf{S}}^*={-}\boldsymbol{\mathsf{S}}$,
we deduce that
\begin{equation}
\label{e:1225}
\text{$\boldsymbol{\mathsf{K}}_n$ is
$\alpha$-strongly monotone and $\beta$-Lipschitzian},
\end{equation}
where $\beta=\rho+\|\boldsymbol{\mathsf{S}}\|$. Thus,
\cite[Corollary~20.28 and Proposition~22.11(ii)]{Livre1}
guarantee that there exists
$\widetilde{\boldsymbol{\mathsf{x}}}_n\in\SHH$ such that
\begin{equation}
\label{e:9716}
\boldsymbol{\mathsf{u}}_n^*=
\boldsymbol{\mathsf{K}}_n\widetilde{\boldsymbol{\mathsf{x}}}_n.
\end{equation}
Hence, by \eqref{e:21} and \eqref{e:5275},
\begin{equation}
\label{e:2913}
\boldsymbol{\mathsf{y}}_n=
(\boldsymbol{\mathsf{K}}_n+\boldsymbol{\mathsf{M}})^{-1}
(\boldsymbol{\mathsf{K}}_n\widetilde{\boldsymbol{\mathsf{x}}}_n)
\quad\text{and}\quad
\boldsymbol{\mathsf{y}}_n^*=
\boldsymbol{\mathsf{u}}_n^*-
\boldsymbol{\mathsf{F}}_n\boldsymbol{\mathsf{y}}_n+
\boldsymbol{\mathsf{S}}\boldsymbol{\mathsf{y}}_n=
\boldsymbol{\mathsf{K}}_n\widetilde{\boldsymbol{\mathsf{x}}}_n-
\boldsymbol{\mathsf{K}}_n\boldsymbol{\mathsf{y}}_n.
\end{equation}
At the same time, we have
$\scal{\boldsymbol{\mathsf{y}}_n}{
\boldsymbol{\mathsf{S}}\boldsymbol{\mathsf{y}}_n}=0$
and it thus results from \eqref{e:21} that
$\pi_n=\scal{\boldsymbol{\mathsf{x}}_n
}{\boldsymbol{\mathsf{y}}_n^*}-\scal{\boldsymbol{\mathsf{y}}_n
}{\boldsymbol{\mathsf{a}}_n^*+
\boldsymbol{\mathsf{S}}\boldsymbol{\mathsf{y}}_n}
=\scal{\boldsymbol{\mathsf{x}}_n-\boldsymbol{\mathsf{y}}_n}{
\boldsymbol{\mathsf{y}}_n^*}$. Altogether,
\eqref{e:21} is a special case of \eqref{e:1602}.

\ref{t:1a}:
Fact~\ref{f:1}\ref{f:1i}.

\ref{t:1b}:
In the light of Fact~\ref{f:1}\ref{f:1ii},
it suffices to verify that
$\widetilde{\boldsymbol{\mathsf{x}}}_n-
\boldsymbol{\mathsf{x}}_n\to\boldsymbol{\mathsf{0}}$.
For every $n\in\NN$, since
$\boldsymbol{\mathsf{K}}_n+\boldsymbol{\mathsf{M}}$ is
maximally monotone \cite[Corollary~25.5(i)]{Livre1}
and $\alpha$-strongly monotone,
\cite[Example~22.7 and Proposition~22.11(ii)]{Livre1} implies that
$(\boldsymbol{\mathsf{K}}_n+\boldsymbol{\mathsf{M}})^{-1}
\colon\SHH\to\SHH$ is $(1/\alpha)$-Lipschitzian.
Therefore, we derive from \eqref{e:21}, \eqref{e:5275},
\cite[Proposition~3.10(i)]{Jmaa20}, and \eqref{e:1225} that
$(\forall\boldsymbol{\mathsf{z}}\in\zer\boldsymbol{\mathsf{M}})
(\forall n\in\NN)$
$\alpha\|\boldsymbol{\mathsf{y}}_n-\boldsymbol{\mathsf{z}}\|
=\alpha\|(\boldsymbol{\mathsf{K}}_n+
\boldsymbol{\mathsf{M}})^{-1}\boldsymbol{\mathsf{u}}_n^*-
(\boldsymbol{\mathsf{K}}_n+\boldsymbol{\mathsf{M}})^{-1}
(\boldsymbol{\mathsf{K}}_n\boldsymbol{\mathsf{z}})\|
\leq\|\boldsymbol{\mathsf{u}}_n^*-
\boldsymbol{\mathsf{K}}_n\boldsymbol{\mathsf{z}}\|
=\|\boldsymbol{\mathsf{K}}_n\boldsymbol{\mathsf{u}}_n
-\boldsymbol{\mathsf{K}}_n\boldsymbol{\mathsf{z}}
+\boldsymbol{\mathsf{e}}_n^*+\boldsymbol{\mathsf{f}}_n^*\|
\leq\|\boldsymbol{\mathsf{K}}_n\boldsymbol{\mathsf{u}}_n
-\boldsymbol{\mathsf{K}}_n\boldsymbol{\mathsf{z}}\|
+\|\boldsymbol{\mathsf{e}}_n^*\|+\|\boldsymbol{\mathsf{f}}_n^*\|
\leq\beta\|\boldsymbol{\mathsf{u}}_n-\boldsymbol{\mathsf{z}}\|
+\|\boldsymbol{\mathsf{e}}_n^*\|+\|\boldsymbol{\mathsf{f}}_n^*\|$.
Thus, since Fact~\ref{f:1}\ref{f:1i-} and our assumption imply that
$(\boldsymbol{\mathsf{u}}_n)_{n\in\NN}$ is bounded, it follows
that $(\boldsymbol{\mathsf{y}}_n)_{n\in\NN}$ is bounded.
At the same time, for every $n\in\NN$,
we get from \eqref{e:21} that
\begin{equation}
\label{e:8157}
\boldsymbol{\mathsf{y}}_n^*
=\boldsymbol{\mathsf{F}}_n\boldsymbol{\mathsf{u}}_n
-\boldsymbol{\mathsf{F}}_n\boldsymbol{\mathsf{y}}_n
+\boldsymbol{\mathsf{e}}_n^*+\boldsymbol{\mathsf{f}}_n^*
-(\boldsymbol{\mathsf{S}}\boldsymbol{\mathsf{u}}_n
-\boldsymbol{\mathsf{S}}\boldsymbol{\mathsf{y}}_n)
=\boldsymbol{\mathsf{K}}_n\boldsymbol{\mathsf{u}}_n
-\boldsymbol{\mathsf{K}}_n\boldsymbol{\mathsf{y}}_n
+\boldsymbol{\mathsf{e}}_n^*+\boldsymbol{\mathsf{f}}_n^*
\end{equation}
and, thus, from \eqref{e:1225} that
$\|\boldsymbol{\mathsf{y}}_n^*\|
\leq\|\boldsymbol{\mathsf{K}}_n\boldsymbol{\mathsf{u}}_n
-\boldsymbol{\mathsf{K}}_n\boldsymbol{\mathsf{y}}_n\|+
\|\boldsymbol{\mathsf{e}}_n^*\|+\|\boldsymbol{\mathsf{f}}_n^*\|
\leq\beta\|\boldsymbol{\mathsf{u}}_n-\boldsymbol{\mathsf{y}}_n\|+
\|\boldsymbol{\mathsf{e}}_n^*\|+\|\boldsymbol{\mathsf{f}}_n^*\|$.
Thus, $(\boldsymbol{\mathsf{y}}_n^*)_{n\in\NN}$
is bounded, from which, \ref{t:1a}, and
Fact~\ref{f:1}\ref{f:1ii-} we obtain
$\varlimsup
\scal{\boldsymbol{\mathsf{x}}_n-\boldsymbol{\mathsf{y}}_n}{
\boldsymbol{\mathsf{y}}_n^*}\leq 0$.
In turn, since $\boldsymbol{\mathsf{x}}_n-
\boldsymbol{\mathsf{u}}_n\to\boldsymbol{\mathsf{0}}$
and $\boldsymbol{\mathsf{e}}_n^*\to\boldsymbol{\mathsf{0}}$,
it results from \eqref{e:8157} and \eqref{e:1741} that
\begin{align}
0&\geq\varlimsup
\scal{\boldsymbol{\mathsf{x}}_n-\boldsymbol{\mathsf{y}}_n}{
\boldsymbol{\mathsf{y}}_n^*}
\nonumber\\
&=\varlimsup\big(
\scal{\boldsymbol{\mathsf{u}}_n-\boldsymbol{\mathsf{y}}_n}{
\boldsymbol{\mathsf{y}}_n^*}+
\scal{\boldsymbol{\mathsf{x}}_n-\boldsymbol{\mathsf{u}}_n}{
\boldsymbol{\mathsf{y}}_n^*}\big)
\nonumber\\
&=\varlimsup\scal{\boldsymbol{\mathsf{u}}_n-
\boldsymbol{\mathsf{y}}_n}{\boldsymbol{\mathsf{y}}_n^*}
\nonumber\\
&=\varlimsup\big(\scal{\boldsymbol{\mathsf{u}}_n-
\boldsymbol{\mathsf{y}}_n}{
\boldsymbol{\mathsf{F}}_n\boldsymbol{\mathsf{u}}_n-
\boldsymbol{\mathsf{F}}_n\boldsymbol{\mathsf{y}}_n+
\boldsymbol{\mathsf{e}}_n^*+\boldsymbol{\mathsf{f}}_n^*}-
\scal{\boldsymbol{\mathsf{u}}_n-
\boldsymbol{\mathsf{y}}_n}{
\boldsymbol{\mathsf{S}}\boldsymbol{\mathsf{u}}_n-
\boldsymbol{\mathsf{S}}\boldsymbol{\mathsf{y}}_n
}\big)
\nonumber\\
&=\varlimsup\big(\scal{\boldsymbol{\mathsf{u}}_n-
\boldsymbol{\mathsf{y}}_n}{
\boldsymbol{\mathsf{F}}_n\boldsymbol{\mathsf{u}}_n-
\boldsymbol{\mathsf{F}}_n\boldsymbol{\mathsf{y}}_n+
\boldsymbol{\mathsf{f}}_n^*}+
\scal{\boldsymbol{\mathsf{u}}_n-
\boldsymbol{\mathsf{y}}_n}{\boldsymbol{\mathsf{e}}_n^*}
\big)
\nonumber\\
&\geq\varlimsup\big((1-\delta)
\scal{\boldsymbol{\mathsf{u}}_n-
\boldsymbol{\mathsf{y}}_n}{
\boldsymbol{\mathsf{F}}_n\boldsymbol{\mathsf{u}}_n-
\boldsymbol{\mathsf{F}}_n\boldsymbol{\mathsf{y}}_n}+
\scal{\boldsymbol{\mathsf{u}}_n-
\boldsymbol{\mathsf{y}}_n}{\boldsymbol{\mathsf{e}}_n^*}
\big)
\nonumber\\
&\geq\varlimsup\alpha(1-\delta)\|\boldsymbol{\mathsf{u}}_n-
\boldsymbol{\mathsf{y}}_n\|^2
\nonumber\\
&\geq\varlimsup\alpha(1-\delta)\rho^{-2}
\|\boldsymbol{\mathsf{F}}_n\boldsymbol{\mathsf{u}}_n-
\boldsymbol{\mathsf{F}}_n\boldsymbol{\mathsf{y}}_n\|^2.
\end{align}
Hence, $\boldsymbol{\mathsf{F}}_n\boldsymbol{\mathsf{u}}_n-
\boldsymbol{\mathsf{F}}_n\boldsymbol{\mathsf{y}}_n\to
\boldsymbol{\mathsf{0}}$. On the other hand, since
$(\boldsymbol{\mathsf{f}}_n^*)_{n\in\NN}$ is bounded and since
\eqref{e:21} yields
$(\boldsymbol{\mathsf{a}}_n^*+
\boldsymbol{\mathsf{S}}\boldsymbol{\mathsf{u}}_n-
\boldsymbol{\mathsf{e}}_n^*)_{n\in\NN}=
(\boldsymbol{\mathsf{F}}_n\boldsymbol{\mathsf{u}}_n-
\boldsymbol{\mathsf{F}}_n\boldsymbol{\mathsf{y}}_n+
\boldsymbol{\mathsf{f}}_n^*)_{n\in\NN}$,
we derive from \eqref{e:1741} that
\begin{align}
\varlimsup(1-\delta)\|\boldsymbol{\mathsf{f}}_n^*\|^2
&=\varlimsup\big(
\scal{\boldsymbol{\mathsf{F}}_n\boldsymbol{\mathsf{u}}_n-
\boldsymbol{\mathsf{F}}_n\boldsymbol{\mathsf{y}}_n}{
\boldsymbol{\mathsf{f}}_n^*}+
(1-\delta)\|\boldsymbol{\mathsf{f}}_n^*\|^2\big)
\nonumber\\
&=\varlimsup\big(
\scal{\boldsymbol{\mathsf{F}}_n\boldsymbol{\mathsf{u}}_n-
\boldsymbol{\mathsf{F}}_n\boldsymbol{\mathsf{y}}_n+
\boldsymbol{\mathsf{f}}_n^*}{
\boldsymbol{\mathsf{f}}_n^*}-
\delta\|\boldsymbol{\mathsf{f}}_n^*\|^2\big)
\nonumber\\
&\leq\varlimsup\big(
\delta\|\boldsymbol{\mathsf{F}}_n\boldsymbol{\mathsf{u}}_n-
\boldsymbol{\mathsf{F}}_n\boldsymbol{\mathsf{y}}_n+
\boldsymbol{\mathsf{f}}_n^*\|^2-
\delta\|\boldsymbol{\mathsf{f}}_n^*\|^2\big)
\nonumber\\
&=\varlimsup\big(
\delta\|\boldsymbol{\mathsf{F}}_n\boldsymbol{\mathsf{u}}_n-
\boldsymbol{\mathsf{F}}_n\boldsymbol{\mathsf{y}}_n\|^2
+2\delta\scal{\boldsymbol{\mathsf{F}}_n\boldsymbol{\mathsf{u}}_n-
\boldsymbol{\mathsf{F}}_n\boldsymbol{\mathsf{y}}_n}{
\boldsymbol{\mathsf{f}}_n^*}\big)
\nonumber\\
&=0.
\end{align}
Therefore,
$\boldsymbol{\mathsf{f}}_n^*\to\boldsymbol{\mathsf{0}}$.
Consequently, by \eqref{e:1225}, \eqref{e:9716}, and \eqref{e:21},
$\alpha\|\widetilde{\boldsymbol{\mathsf{x}}}_n-
\boldsymbol{\mathsf{x}}_n\|\leq
\|\boldsymbol{\mathsf{K}}_n\widetilde{\boldsymbol{\mathsf{x}}}_n-
\boldsymbol{\mathsf{K}}_n\boldsymbol{\mathsf{x}}_n\|
=\|\boldsymbol{\mathsf{K}}_n\boldsymbol{\mathsf{u}}_n-
\boldsymbol{\mathsf{K}}_n\boldsymbol{\mathsf{x}}_n
+\boldsymbol{\mathsf{e}}_n^*+\boldsymbol{\mathsf{f}}_n^*\|
\leq\beta\|\boldsymbol{\mathsf{u}}_n-\boldsymbol{\mathsf{x}}_n\|+
\|\boldsymbol{\mathsf{e}}_n^*\|+\|\boldsymbol{\mathsf{f}}_n^*\|
\to 0$.
\end{proof}

We are now ready to recover \cite[Theorem~13]{MaPr18}. Recall
that, given a real Hilbert space $\HH$ with identity operator
$\Id$, the resolvent of an operator $A\colon\HH\to 2^\HH$ is
$J_A=(\Id+A)^{-1}$.

\begin{corollary}[\cite{MaPr18}]
\label{c:1}
Consider the setting of Problem~\ref{prob:1}
and suppose that $\boldsymbol{\mathsf{Z}}\neq\emp$.
Let $(I_n)_{n\in\NN}$ be nonempty subsets of $I$
and $(K_n)_{n\in\NN}$ be nonempty subsets of $K$ such that
\begin{equation}
\label{e:4503}
I_0=I,\quad
K_0=K,\quad\text{and}\quad
(\exi T\in\NN)(\forall n\in\NN)\;\;
\bigcup_{j=n}^{n+T}I_j=I\;\:\text{and}\;\:
\bigcup_{j=n}^{n+T}K_j=K.
\end{equation}
In addition, let $D\in\NN$, let $\varepsilon\in\zeroun$,
let $(\lambda_n)_{n\in\NN}$ be in
$\left[\varepsilon,2-\varepsilon\right]$,
and for every $i\in I$ and every $k\in K$,
let $(c_i(n))_{n\in\NN}$ and $(d_k(n))_{n\in\NN}$ be in $\NN$
such that
\begin{equation}
\label{e:5550}
(\forall n\in\NN)\quad
n-D\leq c_i(n)\leq n\quad\text{and}\quad
n-D\leq d_k(n)\leq n,
\end{equation}
let $(\gamma_{i,n})_{n\in\NN}$ and $(\mu_{k,n})_{n\in\NN}$
be in $\left[\varepsilon,1/\varepsilon\right]$,
let $x_{i,0}\in\HH_i$, and let $v_{k,0}^*\in\GG_k$. Iterate
\begin{equation}
\label{e:1852}
\begin{array}{l}
\text{for}\;n=0,1,\ldots\\
\left\lfloor
\begin{array}{l}
\text{for every}\;i\in I_n\\
\left\lfloor
\begin{array}{l}
\text{take}\;e_{i,n}\in\HH_i\\
l_{i,n}^*=\sum_{k\in K}L_{k,i}^*v_{k,c_i(n)}^*\\
a_{i,n}=J_{\gamma_{i,c_i(n)}A_i}\big(x_{i,c_i(n)}+
\gamma_{i,c_i(n)}(z_i^*-l_{i,n}^*)+e_{i,n}\big)\\
a_{i,n}^*=\gamma_{i,c_i(n)}^{-1}(x_{i,c_i(n)}-a_{i,n}+e_{i,n})
-l_{i,n}^*
\end{array}
\right.
\\
\text{for every}\;i\in I\smallsetminus I_n\\
\left\lfloor
\begin{array}{l}
a_{i,n}=a_{i,n-1}\\
a_{i,n}^*=a_{i,n-1}^*
\end{array}
\right.
\\
\text{for every}\;k\in K_n\\
\left\lfloor
\begin{array}{l}
\text{take}\;f_{k,n}\in\GG_k\\
l_{k,n}=\sum_{i\in I}L_{k,i}x_{i,d_k(n)}\\
b_{k,n}=r_k+J_{\mu_{k,d_k(n)}B_k}\big(l_{k,n}
+\mu_{k,d_k(n)}v_{k,d_k(n)}^*+f_{k,n}-r_k\big)\\
b_{k,n}^*=v_{k,d_k(n)}^*+\mu_{k,d_k(n)}^{-1}(l_{k,n}-b_{k,n}
+f_{k,n})\\
t_{k,n}=b_{k,n}-\sum_{i\in I}L_{k,i}a_{i,n}
\end{array}
\right.
\\
\text{for every}\;k\in K\smallsetminus K_n\\
\left\lfloor
\begin{array}{l}
b_{k,n}=b_{k,n-1}\\
b_{k,n}^*=b_{k,n-1}^*\\
t_{k,n}=b_{k,n}-\sum_{i\in I}L_{k,i}a_{i,n}
\end{array}
\right.
\\
\text{for every}\;i\in I\\
\left\lfloor
\begin{array}{l}
t_{i,n}^*=a_{i,n}^*+\sum_{k\in K}L_{k,i}^*b_{k,n}^*
\end{array}
\right.
\\
\pi_n=\sum_{i\in I}\big(\scal{x_{i,n}}{t_{i,n}^*}
-\scal{a_{i,n}}{a_{i,n}^*}\big)+
\sum_{k\in K}\big(\scal{t_{k,n}}{v_{k,n}^*}-
\scal{b_{k,n}}{b_{k,n}^*}\big)\\
\text{if}\;\pi_n>0\\
\left\lfloor
\begin{array}{l}
\tau_n=\sum_{i\in I}\|t_{i,n}^*\|^2+\sum_{k\in K}\|t_{k,n}\|^2\\
\theta_n=\lambda_n\pi_n/\tau_n
\end{array}
\right.
\\
\text{else}\\
\left\lfloor
\begin{array}{l}
\theta_n=0
\end{array}
\right.
\\
\text{for every}\;i\in I\\
\left\lfloor
\begin{array}{l}
x_{i,n+1}=x_{i,n}-\theta_nt_{i,n}^*
\end{array}
\right.
\\
\text{for every}\;k\in K\\
\left\lfloor
\begin{array}{l}
v_{k,n+1}^*=v_{k,n}^*-\theta_nt_{k,n}.
\end{array}
\right.\\[3mm]
\end{array}
\right.
\end{array}
\end{equation}
In addition, suppose that there exist
$\eta\in\RPP$, $\chi\in\RPP$, $\sigma\in\zeroun$, and
$\zeta\in\zeroun$ such that
\begin{equation}
\label{e:606a}
(\forall n\in\NN)(\forall i\in I_n)\quad
\begin{cases}
\|e_{i,n}\|\leq\eta\\
\scal{x_{i,c_i(n)}-a_{i,n}}{e_{i,n}}
\geq{-}\sigma\|x_{i,c_i(n)}-a_{i,n}\|^2\\
\scal{e_{i,n}}{a_{i,n}^*+l_{i,n}^*}\leq\sigma\gamma_{i,c_i(n)}
\|a_{i,n}^*+l_{i,n}^*\|^2
\end{cases}
\end{equation}
and that
\begin{equation}
\label{e:606b}
(\forall n\in\NN)(\forall k\in K_n)\quad
\begin{cases}
\|f_{k,n}\|\leq\chi\\
\scal{l_{k,n}-b_{k,n}}{f_{k,n}}\geq{-}\zeta\|l_{k,n}-b_{k,n}\|^2
\\
\scal{f_{k,n}}{b_{k,n}^*-v_{k,d_k(n)}^*}
\leq\zeta\mu_{k,d_k(n)}\|b_{k,n}^*-v_{k,d_k(n)}^*\|^2.
\end{cases}
\end{equation}
Then $((x_{i,n})_{i\in I},(v_{k,n}^*)_{k\in K})_{n\in\NN}$
converges weakly to a point in $\boldsymbol{\mathsf{Z}}$.
\end{corollary}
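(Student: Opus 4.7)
The plan is to exhibit \eqref{e:1852} as an instance of \eqref{e:21} and then conclude with Theorem~\ref{t:1}. Work in the product Hilbert space $\SHH=\bigtimes_{i\in I}\HH_i\times\bigtimes_{k\in K}\GG_k$, with generic element $\boldsymbol{\mathsf{x}}=((x_i)_{i\in I},(v_k^*)_{k\in K})$. Let $\boldsymbol{\mathsf{A}}\colon\SHH\to 2^{\SHH}$ be the block-diagonal operator whose $i$-th primal block is $x\mapsto A_ix-z_i^*$ and whose $k$-th dual block is $v^*\mapsto B_k^{-1}v^*+r_k$, and define
\begin{equation*}
\boldsymbol{\mathsf{S}}\colon((x_i),(v_k^*))\mapsto\Big(\Big(\Sum_{k\in K}L_{k,i}^*v_k^*\Big)_{i\in I},\Big({-}\Sum_{i\in I}L_{k,i}x_i\Big)_{k\in K}\Big).
\end{equation*}
Then $\boldsymbol{\mathsf{A}}$ is maximally monotone, $\boldsymbol{\mathsf{S}}^*={-}\boldsymbol{\mathsf{S}}$, and \eqref{e:kt} reads $0\in(\boldsymbol{\mathsf{A}}+\boldsymbol{\mathsf{S}})\boldsymbol{\mathsf{x}}$, so $\zer(\boldsymbol{\mathsf{A}}+\boldsymbol{\mathsf{S}})=\boldsymbol{\mathsf{Z}}\neq\emp$. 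Take $\boldsymbol{\mathsf{x}}_n=((x_{i,n})_i,(v_{k,n}^*)_k)$.

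To flatten the block-iterative structure, for every $n\in\NN$ define the last update indices $\pi_i(n)=\max\{m\leq n:i\in I_m\}$ and $\kappa_k(n)=\max\{m\leq n:k\in K_m\}$, which are well-defined because $I_0=I$ and $K_0=K$, and satisfy $\pi_i(n),\kappa_k(n)\geq n-T$ by \eqref{e:4503}. Put $\overline{c}_i(n)=c_i(\pi_i(n))$ and $\overline{d}_k(n)=d_k(\kappa_k(n))$; then \eqref{e:5550} yields $n-T-D\leq\overline{c}_i(n),\overline{d}_k(n)\leq n$. Since inactive indices inherit from the previous step, $a_{i,n}$, $a_{i,n}^*$, $b_{k,n}$, $b_{k,n}^*$ satisfy the same resolvent identities uniformly in $n$, with $(c_i(n),d_k(n),e_{i,n},f_{k,n})$ replaced by $(\overline{c}_i(n),\overline{d}_k(n),e_{i,\pi_i(n)},f_{k,\kappa_k(n)})$.

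Now let $\boldsymbol{\mathsf{F}}_n$ be the block-diagonal linear operator acting as $\gamma_{i,\overline{c}_i(n)}^{-1}\Id$ on the $i$-th primal factor and $\mu_{k,\overline{d}_k(n)}\Id$ on the $k$-th dual factor; since the scalars lie in $[\varepsilon,1/\varepsilon]$, $\boldsymbol{\mathsf{F}}_n$ is $\varepsilon$-strongly monotone and $\varepsilon^{-1}$-Lipschitzian. Set
\begin{equation*}
\boldsymbol{\mathsf{u}}_n=\big((x_{i,\overline{c}_i(n)})_i,(v_{k,\overline{d}_k(n)}^*)_k\big),\quad
(\boldsymbol{\mathsf{f}}_n^*)_i=\gamma_{i,\overline{c}_i(n)}^{-1}e_{i,\pi_i(n)},\quad
(\boldsymbol{\mathsf{f}}_n^*)_k=f_{k,\kappa_k(n)},
\end{equation*}
\begin{equation*}
(\boldsymbol{\mathsf{e}}_n^*)_i=\Sum_{k\in K}L_{k,i}^*\big(v_{k,\overline{d}_k(n)}^*-v_{k,\overline{c}_i(n)}^*\big),\quad
(\boldsymbol{\mathsf{e}}_n^*)_k=\Sum_{i\in I}L_{k,i}\big(x_{i,\overline{d}_k(n)}-x_{i,\overline{c}_i(n)}\big).
\end{equation*}
A direct algebraic check---using the resolvent identity for $J_{\gamma A_i}$ on the primal side and the relation $b_{k,n}-r_k\in B_k^{-1}b_{k,n}^*$ on the dual side---confirms that $(\boldsymbol{\mathsf{F}}_n+\boldsymbol{\mathsf{A}})^{-1}\boldsymbol{\mathsf{u}}_n^*=\boldsymbol{\mathsf{y}}_n:=((a_{i,n})_i,(b_{k,n}^*)_k)$, after which $\boldsymbol{\mathsf{a}}_n^*=((a_{i,n}^*)_i,(b_{k,n})_k)$, $\boldsymbol{\mathsf{y}}_n^*=((t_{i,n}^*)_i,(t_{k,n})_k)$, and the quantities $\pi_n$, $\tau_n$, $\theta_n$, $\boldsymbol{\mathsf{x}}_{n+1}$ match \eqref{e:21} line by line.

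With this dictionary, Theorem~\ref{t:1}\ref{t:1a} immediately gives $\sum_n\|\boldsymbol{\mathsf{x}}_{n+1}-\boldsymbol{\mathsf{x}}_n\|^2<\pinf$, hence $x_{i,n+1}-x_{i,n}\to 0$ and $v_{k,n+1}^*-v_{k,n}^*\to 0$. Since the virtual delays lie in a window of size at most $T+D$, this forces $\boldsymbol{\mathsf{u}}_n-\boldsymbol{\mathsf{x}}_n\to\boldsymbol{\mathsf{0}}$ and $\boldsymbol{\mathsf{e}}_n^*\to\boldsymbol{\mathsf{0}}$; the bounds $\|e_{i,n}\|\leq\eta$ and $\|f_{k,n}\|\leq\chi$ make $(\boldsymbol{\mathsf{f}}_n^*)_n$ bounded; and applying the lower two lines of \eqref{e:606a} at $\pi_i(n)$, together with the lower two lines of \eqref{e:606b} at $\kappa_k(n)$, produces the two componentwise inequalities of \eqref{e:1741} with $\delta=\max\{\sigma,\zeta\}\in\zeroun$. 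Theorem~\ref{t:1}\ref{t:1b} then yields weak convergence of $\boldsymbol{\mathsf{x}}_n$ to a point of $\boldsymbol{\mathsf{Z}}$, which is the claim. The main obstacle is the clean split of the residual into the vanishing $\boldsymbol{\mathsf{e}}_n^*$, which absorbs the mismatch between the primal clock $c_i(\mute)$ and the dual clock $d_k(\mute)$, and the bounded $\boldsymbol{\mathsf{f}}_n^*$, which carries the algorithmic errors; once this split is in hand and the virtual delays have been introduced, everything reduces to a line-by-line translation.
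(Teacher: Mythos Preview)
Your proposal is correct and follows essentially the same route as the paper: the same product space $\SHH$, the same block-diagonal $\boldsymbol{\mathsf{A}}$ and skew operator $\boldsymbol{\mathsf{S}}$, the same last-update indices and delayed clocks (your $\pi_i(n),\kappa_k(n),\overline{c}_i(n),\overline{d}_k(n)$ are the paper's $\overline{\ell}_i(n),\overline{\vartheta}_k(n),\ell_i(n),\vartheta_k(n)$), the same diagonal $\boldsymbol{\mathsf{F}}_n$, the same splitting of the residual into a vanishing $\boldsymbol{\mathsf{e}}_n^*$ absorbing the clock mismatch and a bounded $\boldsymbol{\mathsf{f}}_n^*$ carrying the algorithmic errors, and the same verification of \eqref{e:1741} with $\delta=\max\{\sigma,\zeta\}$. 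The only cosmetic issue is that your symbol $\pi_i(n)$ clashes with the scalar $\pi_n$ already used in \eqref{e:1852}; pick a different letter.
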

\begin{proof}
Denote by $\HHH$ and $\GGG$ the Hilbert direct sums
of $(\HH_i)_{i\in I}$ and $(\GG_k)_{k\in K}$,
set $\SHH=\HHH\oplus\GGG$, and define the operators
\begin{equation}
\label{e:894a}
\boldsymbol{\mathsf{A}}\colon\SHH\to 2^{\SHH}\colon
\big((x_i)_{i\in I},(v_k^*)_{k\in K}\big)\mapsto
\bigg(\bigtimes_{i\in I}\big({-}z_i^*+A_ix_i\big)\bigg)
\times\bigg(\bigtimes_{k\in K}\big(r_k+B_k^{-1}v_k^*\big)\bigg)
\end{equation}
and
\begin{equation}
\label{e:894s}
\boldsymbol{\mathsf{S}}\colon\SHH\to\SHH\colon
\big((x_i)_{i\in I},(v_k^*)_{k\in K}\big)\mapsto\Bigg(
\Bigg(\sum_{k\in K}L_{k,i}^*v_k^*\Bigg)_{i\in I},
\Bigg({-}\sum_{i\in I}L_{k,i}x_i\Bigg)_{k\in K}\Bigg).
\end{equation}
Using the maximal monotonicity of the operators
$(A_i)_{i\in I}$ and $(B_k)_{k\in K}$,
we deduce from \cite[Proposition~20.23]{Livre1} that
$\boldsymbol{\mathsf{A}}$ is maximally monotone.
In addition, we observe that $\boldsymbol{\mathsf{S}}$ is a
bounded linear operator with
$\boldsymbol{\mathsf{S}}^*={-}\boldsymbol{\mathsf{S}}$.
At the same time, it results from \eqref{e:894a}, \eqref{e:894s},
and \eqref{e:kt} that
\begin{equation}
\label{e:7968}
\zer(\boldsymbol{\mathsf{A}}+\boldsymbol{\mathsf{S}})=
\boldsymbol{\mathsf{Z}}\neq\emp.
\end{equation}
Furthermore, \eqref{e:1852} yields
\begin{equation}
\label{e:4397}
\big[\;
(\forall i\in I)(\forall n\in\NN)\;\;
a_{i,n}^*\in{-}z_i^*+A_ia_{i,n}\;\big]
\quad\text{and}\quad
\big[\;(\forall k\in K)(\forall n\in\NN)\;\;
b_{k,n}\in r_k+B_k^{-1}b_{k,n}^*
\;\big].
\end{equation}
Next, define
\begin{equation}
\label{e:6052}
(\forall i\in I)(\forall n\in\NN)\quad
\begin{cases}
\overline{\ell}_i(n)=\max\menge{j\in\NN}{j\leq n\;\:
\text{and}\;\:i\in I_j},\;\:
\ell_i(n)=c_i\big(\overline{\ell}_i(n)\big)\\
u_{i,n}^*=\gamma_{i,\ell_i(n)}^{-1}x_{i,\ell_i(n)}-
l_{i,\overline{\ell}_i(n)}^*+
\gamma_{i,\ell_i(n)}^{-1}e_{i,\overline{\ell}_i(n)}\\
w_{i,n}^*=\sum_{k\in K}L_{k,i}^*v_{k,\vartheta_k(n)}^*-
l_{i,\overline{\ell}_i(n)}^*.
\end{cases}
\end{equation}
Then, for every $i\in I$ and every $n\in\NN$, it follows from
\eqref{e:1852} that
\begin{equation}
\label{e:989a}
a_{i,n}
=a_{i,\overline{\ell}_i(n)}
=J_{\gamma_{i,\ell_i(n)}A_i}\big(\gamma_{i,\ell_i(n)}(u_{i,n}^*+
z_i^*)\big)
=\big(\gamma_{i,\ell_i(n)}^{-1}\Id-z_i^*+A_i\big)^{-1}u_{i,n}^*
\end{equation}
and, therefore, that
\begin{equation}
\label{e:989b}
a_{i,n}^*
=a_{i,\overline{\ell}_i(n)}^*
=u_{i,n}^*-\gamma_{i,\ell_i(n)}^{-1}a_{i,\overline{\ell}_i(n)}
=u_{i,n}^*-\gamma_{i,\ell_i(n)}^{-1}a_{i,n}.
\end{equation}
Likewise, for every $k\in K$ and every $n\in\NN$, upon setting
\begin{equation}
\label{e:3599}
\begin{cases}
\overline{\vartheta}_k(n)=\max\menge{j\in\NN}{j\leq n\;\:
\text{and}\;\:k\in K_j},\;\:
\vartheta_k(n)=d_k\big(\overline{\vartheta}_k(n)\big)\\
v_{k,n}=\mu_{k,\vartheta_k(n)}v_{k,\vartheta_k(n)}^*+
l_{k,\overline{\vartheta}_k(n)}+
f_{k,\overline{\vartheta}_k(n)}
\\
w_{k,n}=l_{k,\overline{\vartheta}_k(n)}-
\sum_{i\in I}L_{k,i}x_{i,\ell_i(n)},
\end{cases}
\end{equation}
we get from \eqref{e:1852} and
\cite[Proposition~23.17(iii)]{Livre1} that
\begin{equation}
\label{e:479a}
b_{k,n}
=b_{k,\overline{\vartheta}_k(n)}
=J_{\mu_{k,\vartheta_k(n)}B_k(\mute-r_k)}v_{k,n}
\end{equation}
and, in turn, from
\eqref{e:1852} and \cite[Proposition~23.20]{Livre1} that
\begin{align}
b_{k,n}^*
&=b_{k,\overline{\vartheta}_k(n)}^*
\label{e:2005}\\
&=\mu_{k,\vartheta_k(n)}^{-1}\big(v_{k,n}-
b_{k,\overline{\vartheta}_k(n)}\big)
\nonumber\\
&=\mu_{k,\vartheta_k(n)}^{-1}(v_{k,n}-b_{k,n})
\label{e:479b}
\\
&=J_{\mu_{k,\vartheta_k(n)}^{-1}(r_k+B_k^{-1})}
\big(\mu_{k,\vartheta_k(n)}^{-1}v_{k,n}\big)
\nonumber\\
&=\big(\mu_{k,\vartheta_k(n)}\Id+r_k+B_k^{-1}\big)^{-1}v_{k,n}.
\label{e:479c}
\end{align}
Let us set
\begin{equation}
\label{e:9986}
(\forall n\in\NN)\quad
\begin{cases}
\boldsymbol{\mathsf{x}}_n
=\big((x_{i,n})_{i\in I},(v_{k,n}^*)_{k\in K}\big),
\;\:
\boldsymbol{\mathsf{u}}_n
=\big(\big(x_{i,\ell_i(n)}\big)_{i\in I},
\big(v_{k,\vartheta_k(n)}^*\big)_{k\in K}\big)\\
\boldsymbol{\mathsf{e}}_n^*
=\big((w_{i,n}^*)_{i\in I},(w_{k,n})_{k\in K}\big),
\;\:
\boldsymbol{\mathsf{f}}_n^*
=\big(\big(\gamma_{i,\ell_i(n)}^{-1}
e_{i,\overline{\ell}_i(n)}\big)_{i\in I},
\big(f_{k,\overline{\vartheta}_k(n)}\big)_{k\in K}\big)\\
\boldsymbol{\mathsf{u}}_n^*
=\big((u_{i,n}^*)_{i\in I},(v_{k,n})_{k\in K}\big),
\;\:
\boldsymbol{\mathsf{y}}_n
=\big((a_{i,n})_{i\in I},(b_{k,n}^*)_{k\in K}\big)\\
\boldsymbol{\mathsf{a}}_n^*=
\big((a_{i,n}^*)_{i\in I},(b_{k,n})_{k\in K}\big),
\;\:
\boldsymbol{\mathsf{y}}_n^*=
\big((t_{i,n}^*)_{i\in I},(t_{k,n})_{k\in K}\big)\\
\boldsymbol{\mathsf{F}}_n\colon\SHH\to\SHH\colon
\big((x_i)_{i\in I},(v_k^*)_{k\in K}\big)\mapsto\big(
\big(\gamma_{i,\ell_i(n)}^{-1}x_i\big)_{i\in I},
\big(\mu_{k,\vartheta_k(n)}v_k^*\big)_{k\in K}\big).
\end{cases}
\end{equation}
Then, the operators $(\boldsymbol{\mathsf{F}}_n)_{n\in\NN}$
are $\varepsilon$-strongly monotone and
$(1/\varepsilon)$-Lipschitzian.
For every $n\in\NN$, by virtue of \eqref{e:6052} and
\eqref{e:3599}, we deduce from \eqref{e:894s} that
\begin{equation}
\label{e:2156}
\boldsymbol{\mathsf{S}}\boldsymbol{\mathsf{u}}_n-
\boldsymbol{\mathsf{e}}_n^*
=\Big(\big(l_{i,\overline{\ell}_i(n)}^*\big)_{i\in I},
\big({-}l_{k,\overline{\vartheta}_k(n)}\big)_{k\in K}\Big),
\end{equation}
which yields
\begin{equation}
\label{e:5929}
\boldsymbol{\mathsf{u}}_n^*=
\boldsymbol{\mathsf{F}}_n\boldsymbol{\mathsf{u}}_n-
\boldsymbol{\mathsf{S}}\boldsymbol{\mathsf{u}}_n+
\boldsymbol{\mathsf{e}}_n^*+
\boldsymbol{\mathsf{f}}_n^*.
\end{equation}
Furthermore, we infer from \eqref{e:989a}, \eqref{e:479c},
and \eqref{e:894a} that
\begin{equation}
\label{e:5939}
(\forall n\in\NN)\quad
\boldsymbol{\mathsf{y}}_n
=(\boldsymbol{\mathsf{F}}_n+\boldsymbol{\mathsf{A}})^{-1}
\boldsymbol{\mathsf{u}}_n^*.
\end{equation}
At the same time, \eqref{e:989b} and \eqref{e:479b} imply that
\begin{equation}
\label{e:3823}
(\forall n\in\NN)\quad
\boldsymbol{\mathsf{a}}_n^*=
\boldsymbol{\mathsf{u}}_n^*-
\boldsymbol{\mathsf{F}}_n\boldsymbol{\mathsf{y}}_n,
\end{equation}
while \eqref{e:9986}, \eqref{e:1852}, and
\eqref{e:894s} guarantee that
\begin{equation}
\label{e:5622}
(\forall n\in\NN)\quad\boldsymbol{\mathsf{y}}_n^*
=\boldsymbol{\mathsf{a}}_n^*+
\boldsymbol{\mathsf{S}}\boldsymbol{\mathsf{y}}_n
\quad\text{and}\quad
\pi_n=\scal{\boldsymbol{\mathsf{x}}_n
}{\boldsymbol{\mathsf{y}}_n^*}-\scal{\boldsymbol{\mathsf{y}}_n
}{\boldsymbol{\mathsf{a}}_n^*}.
\end{equation}
Altogether, it follows from \eqref{e:5929}--\eqref{e:5622}
that \eqref{e:1852} is an instantiation of \eqref{e:21}.
Hence, Theorem~\ref{t:1}\ref{t:1a} yields
$\sum_{n\in\NN}\|\boldsymbol{\mathsf{x}}_{n+1}-
\boldsymbol{\mathsf{x}}_n\|^2<\pinf$.
In turn, using \eqref{e:4503}, \eqref{e:5550},
\eqref{e:6052}, and \eqref{e:3599}, we deduce from
\cite[Lemma~A.3]{Moor21} that, for every $i\in I$
and every $k\in K$, we have
$\boldsymbol{\mathsf{x}}_{\ell_i(n)}-\boldsymbol{\mathsf{x}}_n
\to\boldsymbol{\mathsf{0}}$ and 
$\boldsymbol{\mathsf{x}}_{\vartheta_k(n)}-\boldsymbol{\mathsf{x}}_n
\to\boldsymbol{\mathsf{0}}$.
This and \eqref{e:9986} imply that
\begin{equation}
\label{e:2456}
\boldsymbol{\mathsf{u}}_n-\boldsymbol{\mathsf{x}}_n\to
\boldsymbol{\mathsf{0}}.
\end{equation}
Moreover, we deduce from \eqref{e:6052} that
\begin{equation}
(\forall i\in I)\quad
\|w_{i,n}^*\|\leq\sum_{k\in K}\|L_{k,i}^*\|\,
\big\|v_{k,\vartheta_k(n)}^*-v_{k,\ell_i(n)}^*\big\|
\leq\sum_{k\in K}\|L_{k,i}^*\|\,\|
\boldsymbol{\mathsf{x}}_{\vartheta_k(n)}-
\boldsymbol{\mathsf{x}}_{\ell_i(n)}\|\to 0
\end{equation}
and from \eqref{e:3599} that
\begin{equation}
(\forall k\in K)\quad
\|w_{k,n}\|\leq\sum_{i\in I}\|L_{k,i}\|\,\|
x_{i,\vartheta_k(n)}-x_{i,\ell_i(n)}\|
\leq\sum_{i\in I}\|L_{k,i}\|\,\|
\boldsymbol{\mathsf{x}}_{\vartheta_k(n)}-
\boldsymbol{\mathsf{x}}_{\ell_i(n)}\|\to 0.
\end{equation}
Therefore,
$\boldsymbol{\mathsf{e}}_n^*\to\boldsymbol{\mathsf{0}}$.
By \eqref{e:606a} and \eqref{e:606b},
$(\boldsymbol{\mathsf{f}}_n^*)_{n\in\NN}$ is bounded.
In view of \eqref{e:9986}, \eqref{e:606a}, and \eqref{e:606b},
we get from \eqref{e:989a} and \eqref{e:2005} that
\begin{align}
(\forall n\in\NN)\quad
\scal{\boldsymbol{\mathsf{u}}_n
-\boldsymbol{\mathsf{y}}_n}{\boldsymbol{\mathsf{f}}_n^*}
&=\sum_{i\in I}\sscal{x_{i,\ell_i(n)}
-a_{i,n}}{\gamma_{i,\ell_i(n)}^{-1}e_{i,\overline{\ell}_i(n)}}
+\sum_{k\in K}\sscal{v_{k,\vartheta_k(n)}^*-b_{k,n}^*}{
f_{k,\overline{\vartheta}_k(n)}}
\nonumber\\
&\geq{-}\sigma\sum_{i\in I}\gamma_{i,\ell_i(n)}^{-1}\|
x_{i,\ell_i(n)}-a_{i,n}\|^2
-\zeta\sum_{k\in K}\mu_{k,\vartheta_k(n)}
\|v_{k,\vartheta_k(n)}^*-b_{k,n}^*\|^2
\nonumber\\
&\geq{-}\max\{\sigma,\zeta\}\scal{\boldsymbol{\mathsf{u}}_n
-\boldsymbol{\mathsf{y}}_n}{
\boldsymbol{\mathsf{F}}_n\boldsymbol{\mathsf{u}}_n-
\boldsymbol{\mathsf{F}}_n\boldsymbol{\mathsf{y}}_n},
\end{align}
and from \eqref{e:2156}, \eqref{e:989b}, and \eqref{e:479a} that
\begin{align}
\label{e:1184}
\scal{\boldsymbol{\mathsf{a}}_n^*+
\boldsymbol{\mathsf{S}}\boldsymbol{\mathsf{u}}_n-
\boldsymbol{\mathsf{e}}_n^*}
{\boldsymbol{\mathsf{f}}_n^*}
&=\sum_{i\in I}\sscal{a_{i,\overline{\ell}_i(n)}^*+
l_{i,\overline{\ell}_i(n)}^*
}{\gamma_{i,\ell_i(n)}^{-1}
e_{i,\overline{\ell}_i(n)}}
+\sum_{k\in K}\sscal{b_{k,\overline{\vartheta}_k(n)}-
l_{k,\overline{\vartheta}_k(n)}}{
f_{k,\overline{\vartheta}_k(n)}}
\nonumber\\
&\leq\sigma\sum_{i\in I}
\big\|a_{i,\overline{\ell}_i(n)}^*+
l_{i,\overline{\ell}_i(n)}^*\big\|^2+
\zeta\sum_{k\in K}\big\|b_{k,\overline{\vartheta}_k(n)}-
l_{k,\overline{\vartheta}_k(n)}\big\|^2
\nonumber\\
&\leq\max\{\sigma,\zeta\}\|\boldsymbol{\mathsf{a}}_n^*+
\boldsymbol{\mathsf{S}}\boldsymbol{\mathsf{u}}_n-
\boldsymbol{\mathsf{e}}_n^*\|^2.
\end{align}
Altogether, the conclusion follows from
Theorem~\ref{t:1}\ref{t:1b}.
\end{proof}

\begin{remark}
\label{r:1}
Using similar arguments, one can show that the
asynchronous strongly convergent block-iterative method
\cite[Algorithm~14]{MaPr18} can be viewed as an instance of
\cite[Theorem~4.8]{Jmaa20}.
\end{remark}

\begin{ack}
This work is a part of the author's Ph.D. dissertation and it
was supported by the National Science Foundation under
grant DMS-1818946. The author thanks his Ph.D. advisor P. L.
Combettes for his guidance during this work.
\end{ack}

\end{document}